\numberwithin{equation}{section}
\newtheorem{theorem}{Theorem}[section]
\newtheorem{lemma}{Lemma}[section]
\newtheorem{corollary}{Corollary}[section]
\newtheorem{remark}{Remark}[section]
\newcommand{\COM}[1]{}
\newcommand{\dsum}{\displaystyle\sum}
\newcommand{\dint}{\displaystyle\int}
\newcommand{\cid}{\stackrel{d}{\longrightarrow}}
\newcommand{\civ}{\stackrel{v}{\longrightarrow}}
\newcommand{\cij}{\stackrel{J_1}{\longrightarrow}}
\newcommand{\toi}{\to\infty}
\newcommand{\eind}{\stackrel{d}{=}}
    \newcommand{\R}{\mathbb{R}}
    \newcommand{\1}{\mathbb{I}}
    \newcommand{\Sb}{\mathbb{S}}
    \newcommand{\Sc}{\mathcal{S}}
    \newcommand{\Leb}{\mathrm{Leb}}
    \newcommand{\Exp}{\operatorname{E}}
    \newcommand{\vep}{\varepsilon}
\newcommand{\bfV}{{\boldsymbol{V}}}
\begin{document}

\begin{frontmatter}
\title{Limits of renewal processes and Pitman-Yor distribution}
\runtitle{Limits of renewal processes and Pitman-Yor distribution}

\begin{aug}
\author{\fnms{Bojan} \snm{Basrak}\ead[label=e1]{bbasrak@math.hr}}

%IF FOOTNOTE t1 NEEDED \author{\fnms{Bojan} \snm{Basrak}\thanksref{t1}\ead[label=e1]{bbasrak@math.hr}}
%\and
%\author{\fnms{Drago} \snm{\v Spoljari\'c}\ead[label=e2]{drago.spoljaric@rgn.hr}}

%\thankstext{t1}{Corresponding author}
%\thankstext{t2}{First supporter of the project}
%\thankstext{t3}{Second supporter of the project}
\runauthor{B.Basrak}

\affiliation{University of Zagreb} %\thanksmark{m1}}

\address{
Department of Mathematics\\
University of Zagreb\\
Bijeni\v cka 30, Zagreb\\
Croatia\\
\printead{e1}\\
\phantom{E-mail:\ }}

%\address{D. \v Spoljari\'c\\
%Faculty of Mining, Geology and Petroleum Engineering\\
%University of Zagreb\\
%Pierottijeva 6, Zagreb\\
%Croatia\\
%\printead{e2}\\
%}
\end{aug}

\begin{abstract}
We consider a renewal process with regularly varying 
stationary and weakly dependent steps, and
prove that the steps  made before a given time $t$, satisfy an interesting invariance principle.
Namely, together with the age of the renewal process at time $t$,   they 
converge after  scaling  to the  Pitman--Yor distribution. 
We further discuss how our results extend the classical Dynkin--Lamperti theorem.

\end{abstract}

\begin{keyword}[class=AMS]
\kwd[Primary ]{60F17}
\kwd[; secondary ]{60G55}
\kwd{60G70}
\kwd{60F05}
\end{keyword}

\begin{keyword}
\kwd{Dynkin--Lamperti theorem}
\kwd{invariance principle}
\kwd{Pitman--Yor distribution}
\kwd{point process}
\kwd{renewal process}
\kwd{regular variation}
\end{keyword}

\end{frontmatter}

\section{Introduction}

%\COM{TODO: Billingsley, Whitt reference, ,DONE:  O'Brien, LLR or LR, Seneta/Resnick,  Krizmanic strong mix}
%%\section{Model}
By one of the main results in renewal theory, it is known that the age of a renewal process 
has a limiting distribution, % described by the renewal theorem,
given that its  steps have finite mean.
%
%In one of its forms, 
%  the renewal theorem gives the limiting distribution of  the age of a renewal process, provided that its steps have finite mean.
When the 
steps are iid and regularly varying with infinite mean, the limiting distribution is determined  by the 
Dynkin--Lamperti theorem.
 In this article, we aim to understand the limiting behavior of the whole path of such a renewal process before a given time $t$. Moreover, we do that  under milder conditions, that is, we keep the regular variation assumption, but allow certain degree of dependence
between the steps of the renewal process.
More precisely, we  assume that the steps form a stationary sequence $(Y_n)$
of nonnegative random variables which are regularly
varying with index $\alpha \in (0,1)$.
%
% consider a renewal process with stationary, possibly dependent, nonnegative  steps $(Y_t)$ which are  regularly varying
%with index $\alpha \in (0,1)$. 
By one
characterization  of regular variation, see Resnick~\cite{Re87},  this means that there exists a sequence of  nonnegative real numbers $(d_n)$ such that
\begin{equation} \label{e:RegVar}
  n P (Y \in d_n  \;\cdot\;) \civ  \mu (\;\cdot\;) \,, 
 \end{equation}
 as $n\toi$,
where $\civ$ denotes vague convergence of measures on $(0,\infty)$ and the limiting measure
satisfies $\mu(x,\infty) = x^{-\alpha}$ 
for all $x >0$.

It will be useful in the sequel to extend $(d_n)$ to  a function on $[0,\infty)$  by
 denoting $d(t)=d_{\lfloor t\rfloor}$, for $t \geq 0$, with $d_0=1$.  
It is  known that $d$ has an asymptotic inverse, $\widetilde{d}$ say, 
 see Seneta~\cite{Seneta76},
  in the sense that
\begin{equation}\label{d_asympt_eq}
d(\widetilde{d}(t))\sim \widetilde{d}(d(t))\sim t\,,
\end{equation}
as $t\toi$. One can show that
%\begin{equation}\label{def_d_tilde}
%\widetilde{d}(t) \sim \left. \frac{1}{1-F_Y(t)} \right. \,.
%\end{equation} 
%In particular, 
 $\widetilde{d}$   is a regularly varying function with index $\alpha$. 

%\COM{CHATTING, IZBACIO FCLT}
%In order to characterize asymptotic behavior of the renewal steps $(Y_t)$ over a large 
%time interval $[0,t)$, one has to restrict the dependence between the steps in some way.
%Naturally, the iid case is  the easiest to study.
%It is well known  that iid steps $(Y_n)$ which are regularly varying in the sense
% of \eqref{e:RegVar}  with $\alpha \in(0,1)$ satisfy  the functional limit theorem
%%  holds
%%\begin{equation}\label{e:ConvFLT}
%%S_t(\;\cdot\;) = \sum_{i=1}^{\lfloor \widetilde{d} (t) \cdot\rfloor} \frac{Y_i}{t}
%%\cid S (\;\cdot\;)\,, % = \sum_{T_i\leq  \cdot } Z_i \,,
%%\end{equation}
%%where $S$ is a 
%where the limit is a
%stable subordinator or nonnegative stable L\'evy process with  L\'evy  measure
%$\mu$, see Section~\ref{SecPPPY}.
%, and the convergence takes place in Skorohod's $J_1$ topology on the space of
%c\`adl\`ag functions $D[0,\infty)$. 

 Denote by
\begin{equation}\label{e:renewal_process}
\tau(t) = \inf \{ k: Y_1+\cdots + Y_k >t \}\,,
\quad \mbox{ for } t\geq 0\,,
\end{equation} 
 the first passage time of the level $t$
 by the random walk with steps $(Y_n)$.
%\COM{ some text was here, probably not needed}
%In particular, 
%one could guess that in iid case 
%the behavior of the renewal process during
%time interval $[0,t]$ can be captured by the behavior of the 
%partial sum process $(S_t(\cdot))$ during a random time interval $[0,S_t^{\leftarrow}(1)]$
Our main goal is to describe the asymptotics of all  the  steps
in the renewal process before the passage time $\tau(t)$, i.e. of the
random variables
\begin{equation}\label{PrevSteps}
   \frac{Y_i}{t}\,, \quad  i= 1,\ldots , \tau(t) -1 \,, %\left( \right)\,,
\end{equation}
including the age of the renewal process at the passage time, that is
\begin{equation}\label{Meandar}
A^{(t)}  = \frac{t - \sum_{i<\tau(t)} Y_i}{t}\,. %^{\tau(t)-1}
\end{equation}
%This value is also called the length of the last meander interval, cf. Pitman~\cite{Pi06}. 
\COM{Observe that we allow some degree of dependence between the steps 
$(Y_n)$.} 
For iid steps $(Y_n)$,  the proof of the following classical theorem can be found in Bingham et al.~\cite{BGT}.

\begin{theorem}\label{thm_DL} (Dynkin--Lamperti)
Suppose that $(Y_n)$ is  iid sequence of random variables, then
$(Y_n)$ satisfies~\eqref{e:RegVar} with the tail index $\alpha \in (0,1)$
if and only if 
$$
  A^{(t)} \cid  A\,,
$$
as $t\toi$,
where the random variable on the right hand side has a generalized arcsine distribution with the density
\begin{equation}\label{ArcSDens}
q_\alpha (u) = \frac{\sin \pi \alpha}{\pi}  u ^{-\alpha} (1-u)^{\alpha -1 }\,,\quad
u \in [0,1]\,.
\end{equation}
\end{theorem}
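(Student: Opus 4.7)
\medskip

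\noindent\textbf{Proof plan.} The strategy is to deduce the limit of $A^{(t)}$ from a functional invariance principle for the partial sums $S_n = Y_1 + \cdots + Y_n$ and then identify the limit distribution via a computation for the undershoot of an $\alpha$-stable subordinator.

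Under \eqref{e:RegVar} with $\alpha \in (0,1)$ and the iid assumption, the classical theorem of Skorohod yields
$$
\left(\frac{S_{\lfloor n s \rfloor}}{d_n}\right)_{s\geq 0} \cij (\sigma(s))_{s\geq 0}\,,
$$
where $\sigma$ is the $\alpha$-stable subordinator with L\'evy measure $\nu(dy) = (\alpha/\Gamma(1-\alpha))\, y^{-\alpha-1}\,dy$ on $(0,\infty)$. Setting $n=\widetilde d(t)$, so that $d_n\sim t$ by \eqref{d_asympt_eq}, one checks that the functional $x\mapsto \bigl(\inf\{s:x(s)>1\},\;\sup\{x(s):x(s)\leq 1\}\bigr)$ is continuous at paths that cross level $1$ by a jump -- a property that $\sigma$ enjoys almost surely, since it is strictly increasing and purely discontinuous. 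The continuous mapping theorem then delivers jointly $\tau(t)/\widetilde d(t) \cid T:=\inf\{s:\sigma(s)>1\}$ and $S_{\tau(t)-1}/t \cid \sigma(T-)$, so that $A^{(t)} \cid 1-\sigma(T-)$.

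The law of $1-\sigma(T-)$ is then identified using the L\'evy--It\^o compensation formula applied to the jumps of $\sigma$: on $\{0<u<1<u+y\}$ one has
$$
\PP\bigl(\sigma(T-)\in du,\ \sigma(T)-\sigma(T-)\in dy\bigr) = V(du)\,\nu(dy)\,,
$$
where $V(du)=(u^{\alpha-1}/\Gamma(\alpha))\,du$ is the potential measure of $\sigma$. Integrating out $y\in(1-u,\infty)$ gives that $\sigma(T-)$ has density proportional to $u^{\alpha-1}(1-u)^{-\alpha}$ on $(0,1)$, and the identity $\Gamma(\alpha)\Gamma(1-\alpha)=\pi/\sin\pi\alpha$ normalizes this to produce exactly the density $q_\alpha$ for $1-\sigma(T-)=A$.

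For the converse, suppose $A^{(t)}$ has a non-degenerate limit. A Tauberian argument applied to the Laplace transform of the associated renewal function $U(t)=\sum_n \PP(S_n\leq t)$ forces $U$ to be regularly varying, and Karamata's theorem transfers this regular variation back to the tail of $Y$, constraining the index to lie in $(0,1)$ so that the limit can be non-degenerate. The main obstacle in the whole argument is the explicit undershoot computation in the third paragraph; everything else is a continuous-mapping manipulation on top of the standard stable functional limit theorem.
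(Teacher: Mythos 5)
Your forward-direction argument is correct and takes a genuinely different route from the reference the paper cites. The Bingham--Goldie--Teugels proof of this theorem is a direct Tauberian computation built on the renewal-theoretic identity $\PP(A^{(t)}>u)=\int_0^{t(1-u)}\overline F(t-y)\,U(dy)$, with $U$ the renewal measure, combined with Karamata's theorem. You instead pass to the path level: Skorohod's stable functional limit theorem, continuity of the first-passage pair $(T,\,x(T-))$ at paths that cross level $1$ by a jump (almost surely true for the driftless $\alpha$-stable subordinator, which never creeps), and the first-passage decomposition $\PP\bigl(\sigma(T-)\in du,\ \sigma(T)-\sigma(T-)\in dy\bigr)=V(du)\,\nu(dy)$ on $\{u<1<u+y\}$. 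Your undershoot computation is correct and worth recording explicitly: with $\nu(dy)=c\,y^{-\alpha-1}\,dy$ the potential density is proportional to $u^{\alpha-1}$ (the constant $c$ cancels against $V$), integrating $y$ over $(1-u,\infty)$ gives $\sigma(T-)\sim\mathrm{Beta}(\alpha,1-\alpha)$, and hence $A=1-\sigma(T-)$ has density exactly $q_\alpha$. This pathwise route is in fact the spirit of the rest of the paper: Theorems~\ref{thm_PRM}, \ref{thm_Restriction} and \ref{thm_DLextended} establish precisely this forward direction, in greater generality and for dependent steps, via FLT plus continuous mapping. What your route buys is a conceptual derivation of the arcsine law straight from the subordinator's potential measure; what it costs, compared to the Tauberian proof, is reliance on three nontrivial inputs (the FLT itself, $J_1$-continuity of the first-passage functional as in Whitt, Theorem~13.6.4, i.e.\ the paper's Lemma~\ref{Lema2}, and the compensation formula).

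The converse, however, is only a slogan. Your final sentence, that everything besides the undershoot computation is ``a continuous-mapping manipulation on top of the standard stable functional limit theorem,'' mischaracterizes the second half of the theorem: the FLT and continuous mapping do not enter the converse at all. Starting from $A^{(t)}\cid A$ with the arcsine law, one must relate $\PP(A^{(t)}>u)$ to the renewal function via the identity above, pass through its Laplace--Stieltjes transform, apply Karamata's Tauberian theorem together with a monotone-density argument to recover regular variation of $\overline F$, and then argue that the parameter appearing in the limiting arcsine law forces the index $\alpha\in(0,1)$. None of this is supplied in the proposal, and it is arguably the harder half of the statement; as written it is a plan, not a proof.
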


% \eqref{e:RegVar} holds if and only if the random
%variables in \eqref{Meandar} converge
%towards a generalized arcsine law  as $t \toi$. 
 It turns out that the necessity part of this theorem holds  for certain dependent renewal processes too.
 More importantly, in all such cases one can describe the joint asymptotic behavior of  the
 random variables in \eqref{PrevSteps}  and \eqref{Meandar}, and show that they, when ordered, form a sequence which converges towards the so-called
Pitman--Yor distribution.
As far as we know, this result is new
even in the iid case.

 The paper is organized as follows: in Section \ref{SecPPPY} we 
 consider Pitman--Yor distribution on the interval partitions
 from the perspective of point processes theory.
  We further present two limiting theorems about stationary
 strongly mixing sequences $(Y_n)$ satisfying \eqref{e:RegVar} which are likely 
 to be of independent interest.
 These theorems are used in Section~\ref{Sec:main} to determine the asymptotic distribution of
 the 
  steps  ${Y_i}/{t}\,,\  i= 1,\ldots , \tau(t) -1$
  and the age of the renewal process $A^{(t)}$. We also exhibit how this result extends the classical 
  Dynkin--Lamperti theorem and discuss corresponding assumptions. It immediately yields the joint asymptotic distribution for the 
  ranked lengths of excursions in a simple symmetric random walk, cf.
   Cs\'aki and Hu~\cite{CsHu}. 
More technical proofs and  results concerning Skorohod's topology and convergence of point measures are postponed to the Appendix. 

\COM{According to Jacod and Shiryayev it is a "commonly shared feeling that [such discussions] are rather
tedious", p. 324.}

\section{Point processes and Pitman-Yor distribution} \label{SecPPPY}

In a remarkable series of papers: \cite{PiYo92},  \cite{PPY}, \cite{Pe93}; Perman, Pitman and  Yor  describe the distribution of jumps of stable subordinators on a given time interval. In  particular,  Pitman and Yor in~\cite{PiYo92}, use such jumps to introduce a new family  of distributions on interval partitions and relate them to the classical arcsine laws for Brownian motion. Recall that a stable subordinator $(S(t))_{t\geq 0}$ is a L\'evy process with the Laplace transform given by
 the formula
 $$
  E e^{-\lambda S(t) }= \exp\left[ -t \int_0^\infty \left( 1- e^{-\lambda x} \right)\mu'(d x) \right]\,,
 $$  \COM{Bertoin,Perman}
with the L\'evy measure 
$$
  \mu'(dx) = c_\alpha x^{-\alpha -1} dx\,,
 $$
 for some $\alpha \in (0,1)$ and a constant $c_\alpha>0$ which turns out to be unimportant in the 
 sequel. So without loss of generality we typically assume $c_\alpha =1$, i.e. $\mu'=\mu$.
The subordinator  $(S(t))$ has the distribution of the inverse local 
times of $d$--dimensional Bessel process, for $d= 2(1-\alpha)$, with the case $\alpha=1/2$ corresponding to  the Brownian motion. In other words, jumps of the process $(S(t))$ correspond to the lengths of excursions of the Brownian motion or, more generally Bessel process, away form the origin.
By Ito's representation $(S(t))$ can be constructed from a Poisson process $N$ on the space $[0,\infty)\times (0,\infty]$ with 
intensity measure equal to $\Leb \times \mu'$, so that
\begin{equation}\label{ItoRep}
N= \dsum_i \delta_{T_i,P_i}\ \mbox{ and }\  S (t)  = \dsum_{T_i\leq t} P_i,\quad
\mbox{ for } t\in [0,\infty)\,.
\end{equation} \COM{provjeri $S(t) or S_t$ Bertoin pro1.3 wrong?}
 We alternatively say that $N$ is a Poisson random measure and  denote this by $N\sim\mbox{PRM}(\Leb\times \mu')$.
By the construction, $(S(t))$ is a nondecreasing element of  the space of c\`adl\`ag functions $D[0,\infty)$.
Denote by $z^{\leftarrow}$  the  right continuous generalized inverse  of a function $z\in D[0,\infty)$, i.e.
\begin{equation*}
z^{\leftarrow}(u)=\inf\{s\in[0,\infty):z(s)>u\} \,, \quad u \geq 0\,.
\end{equation*}
 The generalized inverse of the process $S(t)$ is the process
$$
 L(s) = S^{\leftarrow}(s) = \inf \{ t: S(t) > s \}\,,\quad s\geq 0\,.
$$
It is well defined and continuous at any $s\geq 0$, and for the reasons explained above it is called local time process by Bertoin in~\cite{BeSub}. 
If we denote by $\mathcal{Z}$ the closure of the range of the process $(S(t))$, the maximal open subintervals in the set $\mathcal{Z}^c \cap (0,s),\ s>0$, correspond to the jumps
of the subordinator before it crosses over level $s$. Their lengths are
$
 (S(t)- S(t-))\,,\  t < L(s),
$ % or using the representation in \eqref{ItoRep}, they are
which are equal to
$$
 P_i\,,\quad T_i < L(s),
$$ 
above, together with the last incomplete jump which has the length 
\begin{equation}\label{LastInt}
 A_s= s-\dsum_{T_i <  L(s)} P_i\,,
\end{equation}
see Bertoin~\cite{BeSub}.  Considering these points in descending order we arrive at the sequence 
$$
 \bfV(s) = \left( V_1(s),V_2(s),V_3(s),\ldots \right) \,.
$$
Observe that the distribution of $\bfV(s)$ corresponds to the distribution of the point process
\COM{cf Kallenberg Ch III, th 3.3} 
$$
%N_{\bfV(s)} = 
 \dsum_{i} \delta_{V_i{(s)}}  =  \delta_{A_s} + \dsum_{T_i <  L(s)} \delta_{P_i} \,.
$$
Clearly, normalizing the infinite sequence $\bfV(s)$ by $s>0$ produces a random sequence which sums up to one.
An extraordinary observation of Pitman and Yor \cite{PiYo92} was that
\begin{equation} \label{PYIdent}
\frac{\bfV(s)}{s} \eind \frac{\bfV(S(t))}{S(t)}\,, \quad
\mbox{ for all } s,t \in (0,\infty)\,.
\end{equation} 
This is surprising, since the sequence on the right hand side is produced by ordering and scaling the points
$$
  P_i\,,\quad T_i \leq t,
$$
 and therefore has no special "last interval" as in \eqref{LastInt}.
Due to the  identity \eqref{PYIdent}, it suffices to describe the distribution of
$\bfV(1)$, thus we denote
$$
 \bfV(1) = \left( V_1(1),V_2(1),V_3(1),\ldots \right)=:  \left( D_1,D_2,D_3,\ldots \right)\,.
$$ 
The distribution of this sequence corresponds to the distribution
\COM{check} of the point process
$$
  M^{(\alpha)} = \dsum_{i=1}^\infty \delta_{D_i}\,.
$$
 It turns out to be easier to describe the distribution of the size--biased permutation of the  
 sequence $ \bfV(1) $, say
 $$
 \left( U_1,U_2,U_3,\ldots \right)\,,
$$
although clearly
$$
 \dsum_{i} \delta_{U_i}= \dsum_{i} \delta_{D_i}  =   M^{(\alpha)}  \,.
$$
Perman~\cite{PeThesis} proved that 
$$
 U_i = \xi_i \prod_{j=1}^{i-1} (1-\xi_j)\,,\quad j =1,2,3,\ldots
$$
for a sequence  of independent random variables $\xi_i=1,2,3,\ldots $, such that
$\xi\sim \mbox{Beta}(1-\alpha, i \alpha)$. We  call the distribution of the sequence $\bfV(1)$,  or equivalently of
the point process $M^{(\alpha)}$, the Pitman--Yor
distribution with parameter $\alpha$. This
distribution has further natural extension to two parameter family of distributions on the interval partitions, see Pitman and Yor~\cite{PiYo97}.
That family found important applications in nonparametric
Bayesian statistics, e.g. see Teh and Jordan~\cite{TeJo} and references therein. Moreover,
the arcsine laws for the fraction of time Brownian motion spends in the upper halfplane at a fixed time $t$ or at inverse local time $L(s)$, can be seen as corollaries of the results in~\cite{PiYo92}.

In the course of showing \eqref{PYIdent}, Pitman and Yor   showed 
that
$U_1$ above actually has the distribution of the final interval length $A_1$
from  \eqref{LastInt}. This distribution is the same as the generalized arcsine distribution 
of the random variable $A$ in theorem~\ref{thm_DL}.
These results allowed Perman~\cite{Pe93} to describe the density of 
$
 \sup \{ P_i \,: T_i < L(1)\} ,
$
which corresponds to the longest excursion of the $d$--dimensional Bessel process completed by the time 1, and
of
$
 D_1 = \max{\{  \sup \{ P_i \,: T_i < L(1)\}, A_1} \} ,
$
which has the same interpretation but  includes the last a.s. incomplete excursion.

\smallskip

 It is well known that   iid sequence  $(Y_n)$ satisfies \eqref{e:RegVar}, 
 if and only if  the following convergence of point processes holds
\begin{equation}\label{e:ConvPP}
 {N}_t:=\sum_{i\geq 1}\delta_{\left(\frac{i}{\widetilde{d}(t)},\frac{{Y}_{i}}{t}\right)}
 \cid 
N =\sum_{i\geq 1}\delta_{\left(T_i,P_i\right)}\,,
 \end{equation} 
 where $N$ denotes a Poisson process on the space $[0,\infty)\times (0,\infty]$ with intensity measure
 $\Leb \times \mu$, see Resnick~\cite{Re87}. The convergence of point processes 
 in \eqref{e:ConvPP} and throughout is to be understood  with respect to the vague topology
 on the space of Radon point measures on  $[0,\infty)\times (0,\infty)$, denoted by
 $M_p = M_p( [0,\infty) \times (0,\infty])$.

% 
%Using  \eqref{e:ConvPP}, for $\alpha \in(0,1)$ one can show that the following functional limit theorem holds
%\begin{equation}\label{e:ConvFLT}
%S_t(\;\cdot\;) = \sum_{i=1}^{\lfloor \widetilde{d} (t) \cdot\rfloor} \frac{Y_i}{t}
%\cid S (\;\cdot\;) = \sum_{T_i\leq  \cdot } Z_i \,,
%\end{equation}
%where $S$ is a strictly positive stable L\'evy process with  L\'evy  measure
%$\mu$, and the convergence takes place in $J_1$ topology on the space of
%c\`adl\`ag functions $D[0,\infty)$. 
%In particular 
%\begin{equation}\label{e:ConvCLT}
%\sum_{i=1}^{\lfloor \widetilde{d}(t) \rfloor} \frac{Y_i}{t}
%\cid S (1)\,,
%\end{equation}
%where $S(1)$ is strictly positive random variable with a stable law with the index $\alpha$, scale parameter $\sigma=1$, skewness parameter $\beta=1$ and  shift parameter $\mu=0$. 
%We 
%\COM{refer to Resnick~\cite{ResHTP}  for the proof?}

If \eqref{e:RegVar} and  \eqref{e:ConvPP} hold for a
general
stationary sequence $(Y_n)$, then it necessarily 
has the extremal index equal to 1, see Leadbetter et al. 
 \cite{LLR} for instance. In other words, the partial maxima in the sequence
 $(Y_n)$ behave as if the sequence was iid, i.e.  
 $ M_n = \max\{ Y_1, \ldots , Y_n \}$ satisfies $M_n/d_n \to \Phi_\alpha$, as $n\toi$ where $\Phi_\alpha$
 denotes the standard Fr\'echet distribution, i.e. $\Phi_\alpha(x) = \exp ( -x ^{-\alpha})$, $x>0$.
Next theorem, proved in the Appendix, claims that the opposite is also true.
Namely, strongly mixing sequences which satisfy \eqref{e:RegVar} and have extremal index equal to 1,
necessarily satisfy  \eqref{e:ConvPP}.
Observe that the theorem holds for all $\alpha>0$, and not merely on the interval
$(0,1)$ which is of our main interest in this paper.

\begin{theorem}\label{thm_PRM}
\COM{probably in Leadbetter and Rootzen!!!}
Suppose that $(Y_n)$ is  a stationary strongly mixing sequence of nonnegative regularly varying random variables with tail index $\alpha>0$.
\COM{with the tail index $\alpha \in (0,1)$?} 
Then
\begin{equation*}
N_t \cid N \,, 
\end{equation*}
as $t \toi$, where $N$ is $\mathrm{PRM}(\Leb\times\mu)$ if and only if 
 $(Y_n)$  has extremal index equal to 1.
%Moreover,  if we denote $W_\alpha=S_\alpha^{\leftarrow}(1)$, see  \eqref{e:SumProc}
%\begin{equation}\label{e:JointConv}
%\left({N}_t,\frac{\tau(t)}{\widetilde{d}(t)}\right)\cid ({N},W_\alpha)\,,
%\end{equation}
%as $t\toi$.
\end{theorem}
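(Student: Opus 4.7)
Here the argument is standard. If $N_t \cid N$, then for the relatively compact rectangle $[0,1]\times(x,\infty]$ (a continuity set of $N$ a.s.), the vague evaluation map yields
\[
   P\bigl(M_{\widetilde{d}(t)} \leq xt\bigr) = P\bigl(N_t([0,1]\times(x,\infty]) = 0\bigr) \to e^{-x^{-\alpha}}\,,
\]
and since $d_{\widetilde{d}(t)}\sim t$ by~\eqref{d_asympt_eq}, this is precisely the statement that the extremal index equals~$1$.

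\textbf{Sufficiency.} This is the substantive direction. The plan is to invoke the characterization of PRM convergence via Kallenberg's theorem for simple point processes on $[0,\infty)\times(0,\infty]$: it suffices to verify, for every finite disjoint union $B=\bigcup_{k=1}^{m}(a_k,b_k]\times(x_k,\infty]$ (such sets form a dissecting ring that generates the vague topology on $M_p$), that
\[
   E\bigl[N_t(B)\bigr] \to \nu(B):=\sum_{k=1}^{m}(b_k-a_k)x_k^{-\alpha}\,,\qquad P\bigl(N_t(B)=0\bigr) \to e^{-\nu(B)}\,.
\]
The mean condition is immediate from stationarity and regular variation: each summand reduces to a count of $n_k(t)\sim(b_k-a_k)\widetilde{d}(t)$ Bernoulli indicators with common success probability $P(Y_1>x_kt)$, and $\widetilde{d}(t)P(Y_1>x_kt)\to x_k^{-\alpha}$.

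For the void probability I would carry out the classical big-blocks/small-blocks decoupling. Writing $M_k^{(t)}$ for the maximum of the $Y_i$ over the indices in the $k$th time block and inserting separating gaps of length $\ell_t$ with $\ell_t\to\infty$ and $\ell_t=o(\widetilde{d}(t))$, strong mixing yields
\[
   \Bigl|\,P\bigl(\bigcap_{k=1}^{m}\{M_k^{(t)}\leq x_kt\}\bigr) - \prod_{k=1}^{m}P\bigl(M_k^{(t)}\leq x_kt\bigr)\,\Bigr| \leq (m-1)\alpha(\ell_t)\to 0\,.
\]
The excised indices form a vanishing fraction, so each reduced block still has size $\sim(b_k-a_k)\widetilde{d}(t)$, whence the product $n_kP(Y_1>x_kt)\to(b_k-a_k)x_k^{-\alpha}$. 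The hypothesis that the extremal index equals~$1$ then gives $P(M_k^{(t)}\leq x_kt)\to\exp(-(b_k-a_k)x_k^{-\alpha})$ for each $k$, and the product delivers $e^{-\nu(B)}$.

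\textbf{Main obstacle.} The delicate step is the last one: one must extract genuine block-max convergence $P(M_{n_k}\leq x_kt)\to\exp(-(b_k-a_k)x_k^{-\alpha})$ from the single hypothesis $\theta=1$, which a priori only concerns the full-sequence maximum along the canonical normalization $d_n$. Under strong mixing this follows by a Leadbetter-type subsampling argument (the Loynes lemma together with $D(u_n)$ for strongly mixing sequences), but it has to be carried out uniformly across the $m$ blocks, and one must track that the rescaling $xt$ to $d_{n_k}$ with $n_k\sim(b_k-a_k)\widetilde{d}(t)$ preserves the Fr\'echet limit; this is exactly the place where strong mixing (rather than weaker asymptotic independence) is used in an essential way.
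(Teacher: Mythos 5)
Your proposal is correct in outline and would yield a valid proof, but it travels a genuinely different road from the paper's. You proceed via Kallenberg's simple-point-process criterion: verify convergence of mean measures on a dissecting ring and convergence of void probabilities, establishing the latter through classical big-block/small-block decoupling under strong mixing, and feeding the hypothesis $\theta=1$ into a Leadbetter-type block-maximum limit. The paper instead passes through the modern tail-process framework of Basrak--Segers and Basrak--Krizmani\'c--Segers (BKS): it constructs sequences $l_n, r_n$, uses a proposition of Krizmani\'c to upgrade strong mixing to the Laplace-functional decoupling condition $\mathcal{A}'(a_n)$, invokes O'Brien's characterization $\theta=\lim P(M_{r_n}\le d_n u\mid Y_0>d_n u)$ to derive the anti-clustering condition, concludes the tail sequence is trivial, then quotes Theorem~2.3 of BKS to get convergence of $\widetilde N_n=\sum_i\delta_{(i/n,\,Y_i/d_n)}$, and finishes with a continuous-mapping change of time/space scales to pass from $\widetilde N_n$ to $N_t$. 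Your route is more self-contained and elementary; the paper's route leans on heavier cited machinery but is shorter once that machinery is granted, and it also makes transparent exactly where the extremal-index hypothesis is used (anti-clustering via O'Brien), which is slightly obscured in the Kallenberg route.

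Two remarks on the place you flag as the ``main obstacle.'' First, the step is indeed the crux in your approach, but it is not really in doubt: under strong mixing the extremal index, if it exists, is defined for every $\tau>0$, and the equivalence $n\bar F(u_n)\to\tau \iff P(M_n\le u_n)\to e^{-\theta\tau}$ holds for \emph{any} threshold sequence (Leadbetter--Lindgren--Rootz\'en, Thm.~3.7.1), so the rescaling bookkeeping $x_k t\sim x_k(b_k-a_k)^{-1/\alpha}d_{n_k}$ is routine regular-variation calculus rather than a gap. Second, for the necessity direction you should note (as the paper implicitly does by citing Leadbetter et al.) that the computation only gives $\theta=1$ along the subsequence $n=\lfloor\widetilde d(t)\rfloor$; it is the strong-mixing assumption that guarantees the extremal index is a genuine constant independent of the threshold family, so the subsequential statement upgrades to the full one. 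Neither point is an error, but both deserve a sentence in a finished write-up.
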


For iid steps, 
 \eqref{e:ConvPP} and the continuous mapping theorem 
imply
\begin{equation}\label{e:ConvFLT1}
S_t(\;\cdot\;) = \sum_{i=1}^{\lfloor \widetilde{d} (t) \cdot\rfloor} \frac{Y_i}{t}
\cid S (\;\cdot\;) = \sum_{T_i\leq  \cdot } P_i \,,
\end{equation}
 in $D[0,\infty)$  with respect to Skorohod's $J_1$ metric, see  Resnick~\cite{Re07}, Chapter 7, cf.
  theorem \ref{thm_Restriction} below.
Moreover,  for $\alpha\in(0,1)$ \COM{(see. Lemma 7.2.9. in Mikosch~\cite{MikoschNLIM}).} 
 \begin{equation*}
S(t) = \sum_{T_i\leq  t } P_i
  = \dint_{[0,t]\times (0,\infty]} y N(dt,dy) \,,
\end{equation*}
has finite value with probability 1 for all $t>0$.
Observe that %$S_t^{\leftarrow}(1) = {\tau(t)}/{\widetilde{d}(t)}$ or more generally
\begin{equation*}
{S_t^{\leftarrow}(1)  = \inf\left\{s:\sum_{i=1}^{\lfloor \widetilde{d}(t)s \rfloor}Y_i>t\right\}}=\frac{\inf\left\{k\in\mathbb{N}:\sum_{i=1}^{k}Y_i>t\right\}}{\widetilde{d}(t)}=\frac{\tau(t)}{\widetilde{d}(t)}\,.
\end{equation*}
Denote  $L^{(t)}=  S_t^{\leftarrow}(1) ={\tau(t)}/
{\widetilde{d}(t)}$   and recall
$L(u) =  S^{\leftarrow}(u) = \inf \{ t: S(t) > u \}$. For simplicity denote
$L =  L(1)$.
By an application of the
continuous mapping argument, from  \eqref{e:ConvFLT1} one can also show
\begin{equation*}\label{e:ConvRenewal}
L^{(t)} = \frac{\tau(t)}{\widetilde{d}(t)} = S_t^{\leftarrow}(1) 
\cid S^{\leftarrow}(1)= L \,.
\end{equation*}
In the following theorem we show that 
this convergence is joint with the convergence in 
\eqref{e:ConvFLT1},
whenever \eqref{e:ConvPP} holds. 
%In all such cases, this can be used to determine the asymptotic distribution of the steps $(Y_n)$  before the passage time ${\tau(t)}$, as we show in the next section.
%\COM{explain Mittag Lefler dist}
%
%\COM{pazi na indekse!!}

\begin{theorem}\label{thm_Restriction}
Suppose that $(Y_n)$ is  a stationary strongly mixing sequence of nonnegative regularly varying random variables
with extremal index equal to one and the tail index $\alpha \in (0,1)$.
Then, as $t\toi$	
\begin{equation}\label{JointConv}
 (N_t, S_t , L^{(t)}) \cid 
 (N, S , L)\,,
% \left(( S_t(\cdot)), L^{t}, {N}_t\Big|_{ \left[0,L^{t} \right]} \right)
% \cid
%  \left(( S(\cdot)), L , {N}\Big|_{\left[0,L \right]} \right)\,,
\end{equation}
in the product space $M_p \times D[0,\infty)  \times  \R $ and the corresponding  product topology (of  vague,  $J_1$ and Euclidean topologies).
\end{theorem}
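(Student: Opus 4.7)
The plan is to build up from the point process convergence $N_t\cid N$ supplied by Theorem~\ref{thm_PRM}, via a large--jump/small--jump truncation that renders $S_t$ a near--continuous functional of $N_t$, and finally a continuous mapping argument for the right--continuous inverse evaluated at~$1$. For each $\epsilon >0$ introduce the truncations
\begin{equation*}
 S_t^\epsilon(u) = \sum_{i=1}^{\lfloor \widetilde d (t)u\rfloor} \frac{Y_i}{t}\1_{\{Y_i/t >\epsilon\}}\,,\qquad
 S^\epsilon(u) = \sum_{T_i\le u}P_i\1_{\{P_i>\epsilon\}}\,,
\end{equation*}
both built from the atoms of $N_t$, respectively $N$, lying above the level $y=\epsilon$. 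The functional $m\mapsto \bigl(u\mapsto \int_{[0,u]\times(\epsilon,\infty]} y\, m(ds,dy)\bigr)$ from $M_p$ into $D[0,\infty)$ equipped with Skorohod's $J_1$ topology is continuous at every measure with no atom on the horizontal level set $[0,\infty)\times\{\epsilon\}$, a property which holds a.s.\ for $N$ (the mean measure $\Leb\times\mu$ charges no such line). The continuous mapping theorem therefore gives, for every $\epsilon>0$, the joint convergence $(N_t,S_t^\epsilon)\cid(N,S^\epsilon)$ in $M_p\times D[0,\infty)$.

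Next I remove the truncation uniformly in $t$. Since $u\mapsto S_t(u)-S_t^\epsilon(u)$ is non-decreasing, $\sup_{u\le T}\bigl(S_t(u)-S_t^\epsilon(u)\bigr) = S_t(T)-S_t^\epsilon(T)$, and by stationarity
\begin{equation*}
 \Exp\bigl[S_t(T)-S_t^\epsilon(T)\bigr] = \lfloor \widetilde d(t)T\rfloor\,\Exp\!\left[\frac{Y_1}{t}\1_{\{Y_1\le t\epsilon\}}\right].
\end{equation*}
Karamata's theorem for $\alpha\in(0,1)$ yields $\widetilde d(t)\Exp\bigl[(Y_1/t)\1_{\{Y_1\le t\epsilon\}}\bigr]\to \frac{\alpha}{1-\alpha}\epsilon^{1-\alpha}$, which vanishes as $\epsilon\downarrow 0$, so Markov's inequality produces $\lim_{\epsilon\downarrow 0}\limsup_{t\toi}\PP\bigl(\sup_{u\le T}|S_t(u)-S_t^\epsilon(u)|>\eta\bigr) = 0$ for all $T,\eta>0$. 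The analogous monotone a.s.\ approximation $S^\epsilon\to S$ uniformly on compacts follows from the same calculation with the L\'evy measure $\mu$ in place of the law of $Y_1/t$. A standard approximation lemma (e.g.\ Billingsley, Theorem~3.2) then upgrades the preceding display to $(N_t,S_t)\cid(N,S)$ in $M_p\times D[0,\infty)$.

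Finally, the partial sums $S(T_i)=P_1+\cdots+P_i$ have continuous distributions, so almost surely the level $1$ lies strictly between two consecutive such sums, i.e.\ $S(L-)<1<S(L)$. This is precisely the condition ensuring that the functional $f\mapsto f^\leftarrow(1)$ is continuous at $f=S$ in $(D[0,\infty),J_1)$, and one further application of the continuous mapping theorem produces~\eqref{JointConv}. I expect the main obstacle to be the uniform--in--$t$ small--jump estimate in the second step: it must be established without invoking independence of the steps $(Y_n)$, but the first--moment bound keyed to Karamata's theorem sidesteps any second--moment computation that would feel the dependence structure. All of the dependence enters the argument only through Theorem~\ref{thm_PRM}; everything else is purely topological.
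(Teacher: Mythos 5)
Your overall architecture matches the paper's own proof almost exactly: start from $N_t\cid N$ (Theorem~\ref{thm_PRM}), pass to $S_t$ by truncating the jumps at level $\epsilon$ and applying the continuous mapping theorem, remove the truncation with a first--moment Karamata estimate that only needs stationarity, and finally deduce $L^{(t)}=S_t^\leftarrow(1)\cid L$ by an a.s.\ continuity argument for the first--passage functional. The paper compresses the truncation step into a reference to Resnick's Theorem 7.1 and Billingsley's Lemma 16.3, and your explicit remark that the small--jump bound survives under dependence because it is a first--moment, not a second--moment, estimate is exactly the observation that makes the reference legitimate.

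The one place where your justification is genuinely wrong (though the conclusion you want is true) is the last step. You write that ``the partial sums $S(T_i)=P_1+\cdots+P_i$ have continuous distributions, so almost surely the level $1$ lies strictly between two consecutive such sums.'' Since the L\'evy measure $\mu$ is infinite for $\alpha\in(0,1)$, the Poisson process $N$ has infinitely many points in every time strip $[0,\delta]\times(0,\infty]$, so there is no ``first'' atom $T_1$, no ordering of the $T_i$ by time, and in particular no ``two consecutive partial sums.'' The jump times are dense; $S$ is a pure--jump, strictly increasing process with dense jump set. The correct fact you need is the standard subordinator result (Bertoin, \emph{Subordinators: examples and applications}, Prop.~1.9): for a driftless subordinator with infinite L\'evy measure and any fixed level $x>0$, $P\bigl(S(L(x)-)<x<S(L(x))\bigr)=1$, i.e.\ $S$ a.s.\ jumps strictly over $x$ and does not creep. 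This is what makes $f\mapsto f^\leftarrow(1)$ a.s.\ continuous at $S$. The paper sidesteps this by quoting Lemma~\ref{Lema2} (itself a consequence of Whitt's Theorem~13.6.4), whose hypothesis $s(v)<1$ for $v<\tau$ holds a.s.\ by the same subordinator fact. If you replace the ``consecutive partial sums'' sentence with this reference, your argument is complete and coincides with the paper's.
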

\begin{proof}

By theorem~\ref{thm_PRM}
$$
 N_t \cid N\,,
$$
as $t\toi$.
We will first prove the convergence of the other two components
in \eqref{JointConv} by an application of the continuous mapping argument.
Since all the components are obtained by a transformation of the point process
 ${N}_t$, one can easily see that the convergence is joint. \COM{really?}

The proof of $S_t \cid S$ in $J_1$ topology is standard.
One could first observe that the functional $\psi^{\vep,t} :M_p \to D[0,t]$, given by
$$
 \psi^{\vep,t} (s)  = \sum_{t_i\leq s} x_i \1_{\{ x_i > \vep \}}\,,
 \quad \mbox{ for }\quad  m = \sum_i \delta_{t_i,x_i} \mbox{ and } s \in [0,t]\,,
$$
is a.s. continuous with respect to the distribution of the limiting point process $N$ and
chosen topologies.  Then one can simply follow the lines of the proof of Theorem 7.1 in Resnick~\cite{Re07}, and finally
apply lemma 16.3 in Billingsley~\cite{Bi99} to extend the convergence from
$D[0,t]$ to $D[0,\infty]$.

By the
continuous mapping argument, see Lemma~\ref{Lema2} in the Appendix, it follows that
\begin{equation*} \label{e:ConvRenewal2}
L^{(t)} = S_t^{\leftarrow}(1) 
\cid S^{\leftarrow}(1)= L \,.
\end{equation*}
%\qed

\end{proof}

 The random variable $L$ in \eqref{JointConv} represents the first passage time of the level one by the $\alpha$--stable subordinator $S$. Its distribution is known
 in the literature as 
a Mittag--Leffler  distribution.

\section{Main theorem} \label{Sec:main}

Our main result extends the sufficiency part  of the  Dynkin--Lamperti theorem in a couple of ways.
We first show that one can describe the limiting distribution of not merely the age of
the renewal process at time $t$, but also the behavior of all other large steps  before that time. By doing that, we  obtain the Pitman--Yor distribution as
the limiting distribution for the steps after appropriate normalization.
We also show 
 that the statement of   Dynkin--Lamperti theorem about iid regularly varying random variables
  can be generalized to cover all
regularly varying sequences with non--clustering extremes considered in the previous section. 
For simplicity, denote
$$
 A^{(t)}  = 1- S_t\left(L^{(t)} - \right) = 
 %=  \left( 1 -  \sum_{i < \tau (t) } \frac{Y_i}{t}\right) =
  \frac{ t -  \sum_{i < \tau (t) } {Y_i}}{t}
$$
and  $A= A_1=  1- S\left(L - \right)$. 
%Moreover, denote
% $$
% M^{(\alpha,t)} = \delta_{A^{(t)} } +  \sum_{i < \tau (t) } \delta_{ {Y_i}/{t}} 
%$$

\begin{theorem}\label{thm_DLextended}
Suppose that $(Y_n)$ is  a stationary strongly mixing sequence of nonnegative regularly varying random variables
with extremal index equal to one and the tail index $\alpha \in (0,1)$. Then,
as $t\toi$, 
\begin{equation}\label{e:MT}
 \delta_{A^{(t)} } +  \sum_{i < \tau (t) } \delta_{ {Y_i}/{t}} 
 %M^{(\alpha,t)}  
 \cid
 M^{(\alpha)}\,,
% \sum_{T_i < L } \delta_{ P_i} + \delta_{A}\,, 
\end{equation}
where $ M^{(\alpha)}$ represents a Pitman-Yor point process with parameter $\alpha$.
Moreover, the convergence above is joint with
$$
  A^{(t)} \cid  A\,,
$$
as $t\toi$,
%$$
%1- S_t\left(L^{(t)} - \right) =  \left( 1 -  \sum_{i < \tau (t) } \frac{Y_i}{t}\right)
% \cid
% 1- S\left(L - \right)=A_1 \,,
%$$
% $$
%  1- S_t\left (\frac{\tau_t }{\widetilde{d}(t)} - \right) =  \left( 1 -  \sum_{i < \tau (t) } \frac{Y_i}{t}\right)  \cid
%  1- S(W_\alpha -)\,,
%$$
%\COM{indeksi $\alpha, t$ nisu isti izvadi $ S_t \left (\frac{\tau_t } {\widetilde{d}(t)} -\right)$ ispred}
where $A$ has the generalized arcsine distribution given in \eqref{ArcSDens}.
\end{theorem}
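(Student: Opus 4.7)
The strategy is to derive \eqref{e:MT} by applying the continuous mapping theorem to the joint convergence $(N_t, S_t, L^{(t)}) \cid (N, S, L)$ already supplied by Theorem~\ref{thm_Restriction}. I would introduce a map
\[
\Psi(m, s, \ell) = \delta_{1-s(\ell-)} + \int \mathbf{1}_{\{t<\ell\}}\, \delta_x \, m(dt, dx),
\]
defined on $M_p \times D[0,\infty) \times [0,\infty)$ and taking values in the point measures on $(0,\infty]$. By construction the pre-limit object on the left of \eqref{e:MT} equals $\Psi(N_t, S_t, L^{(t)})$ and the candidate limit $\delta_A + \sum_{T_i<L} \delta_{P_i}$ equals $\Psi(N, S, L)$; the latter is exactly the Pitman--Yor point process $M^{(\alpha)}$ described in Section~\ref{SecPPPY}.

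The bulk of the work is to verify that $\Psi$ is almost surely continuous at $(N,S,L)$. I would invoke the following properties, each holding with probability one: $N$ is a simple point measure; the first-passage time $L$ coincides with a unique atom $(L, P_L)$ of $N$; the strict inequalities $S(L-) < 1 < S(L)$ hold; and for every $\vep>0$ only finitely many atoms $(T_i, P_i)$ with $T_i<L$ satisfy $P_i>\vep$. Under such a configuration, the list of atoms $(T_i,P_i)$ with $T_i<L$ and $P_i>\vep$ is stable under small perturbations of $(N,S,L)$ in the vague--$J_1$--Euclidean topology, yielding the vague continuity of the integral part of $\Psi$.

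The delicate point, which I expect to be the main obstacle, is the continuity of $(s,\ell)\mapsto s(\ell-)$ at $(S,L)$, since $\ell=L$ sits on a jump of $s=S$ and $J_1$ convergence does not by itself control $s_n(\ell_n-)$ at such points. The crucial observation is that the sequence $\ell_n := L^{(t_n)}$ is not arbitrary: it equals $s_n^{\leftarrow}(1)$ with $s_n = S_{t_n}$. The strict inequalities $S(L-)<1<S(L)$ force, for $n$ large enough, the $J_1$ time-change aligning $s_n$ with $S$ to match $\ell_n$ with the particular jump of $s_n$ corresponding to the jump of $S$ at $L$, so $s_n(\ell_n-) \to S(L-)$. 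I would isolate this as a short deterministic lemma on càdlàg convergence, in the spirit of Lemma~\ref{Lema2} in the Appendix.

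Granted continuity of $\Psi$, the continuous mapping theorem applied to the joint convergence of Theorem~\ref{thm_Restriction} yields \eqref{e:MT}. The further statement $A^{(t)} \cid A$ is immediate and joint, since $A^{(t)} = 1 - S_t(L^{(t)}-)$ is literally the first coordinate of the construction built into $\Psi$. Finally, the identification of the law of $A = 1 - S(L-) = A_1$ with the generalized arcsine density in \eqref{ArcSDens} is precisely the Pitman--Yor distributional identity for the size-biased first pick $U_1$ recalled in Section~\ref{SecPPPY}, which completes the statement of the theorem.
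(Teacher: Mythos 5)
Your proposal is correct and follows essentially the same route as the paper: the paper likewise starts from the joint convergence $(N_t, S_t, L^{(t)}) \cid (N,S,L)$ of Theorem~\ref{thm_Restriction} and then invokes two auxiliary results from the Appendix, Lemma~\ref{Lema2} (which supplies exactly the deterministic c\`adl\`ag lemma you identify as the delicate point, giving $s_t(\tau_t-) \to s(\tau-)$ under $s(\tau-)<1<s(\tau)$) and Theorem~\ref{thm_contin} (which is precisely your ``stability of the atom list'' step, establishing $n_t|_{[0,\tau_t)} \civ n|_{[0,\tau)}$ under the same regularity conditions). The paper presents these as separate applications followed by a Laplace-functional argument to pass from the two-dimensional point process to its second-coordinate marginal, whereas you package everything into a single map $\Psi$ and one continuous-mapping step; this is a cosmetic reorganization, not a different proof.
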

%\COM{sakrti ruzne dijelove, definiraj PY point process}

For a measure $\nu$ on a measurable space $(\Sb,\Sc)$, by $\nu\vert_{B}$ we denote 
the restriction of the measure $\nu$ on the set $B\in \Sc$ given by $\nu\vert_{B}(C) =  \nu (B\cap C),$
 $C \in \Sc$. Abusing this  notation somewhat, for any time period $A\subseteq [0,\infty)$
and an arbitrary  point measure $n \in M_p([0,\infty)\times(0,\infty])$, we write
\begin{equation}\label{e:NtRestr}
n\Big|_{A} 
\ \mbox{for }\ 
%N_t\Big|_{A\times(0,\infty]}\, \ 
n\Big|_{A\times(0,\infty]}\,.
%N_t\Big|_{\left[0,\frac{\tau(t)}{\widetilde{d}(t)}\right]}: =
%N_t\Big|_{\left[0,\frac{\tau(t)}{\widetilde{d}(t)}\right]\times(0,\infty]}\,,
\end{equation}
 
\begin{proof}
By theorem~\ref{thm_Restriction}, $(N_t,S_t) \cid (N,S)$ in the appropriate
product topology, as $t\toi$.
Moreover, the limit $(N,S)$ a.s. satisfies the regularity assumption
of lemma~\ref{Lema2} and theorem~\ref{thm_contin} below. Therefore, this convergence is
joint with the convergence in $$A^{(t)} \cid A\,.$$ 
By  theorem~\ref{thm_contin}, as $t\toi$,
$$
 N_t\big|_{[0,L^{(t)})} = \dsum_{i < \tau(t)} \delta_{\left(\frac{i}{\widetilde{d}(t)},\frac{{Y}_{i}}{t}\right)}
 \cid
N \big|_{[0,L)} =\sum_{T_i <  L}\delta_{\left(T_i,P_i\right)}\,.
$$
In particular for
$f \in C_{K}^{+}((0,\infty])$, where $C_{K}^{+}$ denotes the family of nonnegative continuous functions with compact support
$$
E \left[ \exp \left\{ - \sum_{i < \tau(t)} f (Y_i /t) \right\} \right]
\to 
E \left[ \exp \left\{ - \sum_{T_i < L} f (P_i) \right\} \right]\,,
$$
 as $t\toi$.
Since the corresponding Laplace functionals converge, we conclude that
$$
 \sum_{i < \tau (t) } \delta_{ {Y_i}/{t}} 
 \cid \sum_{T_i < L} \delta_{P_i}\,.
$$
Because, this holds jointly with $A^{(t)} \cid A$. We conclude that
$$
\delta_{A^{(t)} } + \sum_{i < \tau (t) } \delta_{ {Y_i}/{t}} \cid
\delta_A +  \sum_{T_i < L} \delta_{P_i} = M^{(\alpha)}\,.
% \sum_{T_i < L} \delta_{P_i}+
% \delta_{1- S_t \left (\frac{\tau_t } {\widetilde{d}(t)} -\right)  }\,.
$$
%\qed

\end{proof}
 \COM{check this all}

\begin{remark}
 The strong mixing assumption in the theorem is actually unnecessarily strong,
 one could alternatively consider any stationary sequence $(Y_n)$ which satisfies \eqref{e:ConvPP}.
 In the extreme value theory  it is known that this holds under milder conditions
  cf. Basrak et al.~\cite{BKS}.
%  Instead of the assumptions above,
%  it is enough to suppose that there exists a sequence $(r_n)$, $r_n\toi$, $r_n = o(n)$ 
%  such that \eqref{e:mixcon} and \eqref{e:anticluster} hold, 
%  as clear from the proof of theorem~\ref{thm_PRM}, cf. Basrak et al.~\cite{BKS}.
%  Those assumptions (or versions thereof) appeared in the literature repeatedly, again see \cite{BKS}
%  and references therein.
\end{remark}
\begin{remark}  
 An interesting implication of  theorem~\ref{thm_DLextended} concerns 
  the lengths of excursions  of the simple symmetric random walk during the first $n$ steps. They are known to be independent
  and regularly varying with index $\alpha=1/2$. Therefore, theorem can be applied to 
  deduce and extend results in Cs\'aki and Hu~\cite{CsHu} about the asymptotic distribution of
these excursions. %, cf. Basrak and \v Spoljari\'c~\cite{BaSp}.
\end{remark}

The value $  A^{(t)} $ in theorem~\ref{thm_DLextended} is called the undershoot or  the age of the renewal process at time $t$. Similarly, one could define the overshoot at $t$ as
$$
 B^{(t)}  = S_t\left(L^{(t)}\right) - 1 = 
  \frac{   \sum_{i \leq  \tau (t) } {Y_i} - t}{t}\,.
$$
Recall that $L^{(t)}$ represents the scaled first passage time. Straightforward application of
theorem~~\ref{thm_Restriction} and  lemma~\ref{Lema2} yields the following corollary
which should be compared with Dynkin--Lamperti theorem, cf.
theorem  8.6.3 of  Bingham et al.~\cite{BGT}.
Note however that the corollary admits weak dependence between the steps 
of the renewal process.

\begin{corollary}
Under the assumptions of theorem~\ref{thm_DLextended}, the convergence in \eqref{e:MT}
is joint with 
$$
 \left(A^{(t)},B^{(t)},L^{(t)} \right)
 \cid \left(A,B,L \right)\,,
$$
as $t\toi$. Moreover, the joint density of the random vector $(A,B)$ is
given in Bingham et al.~\cite{BGT} theorem 8.6.3, while the random variable $L$ has
the same distribution as
in theorem~~\ref{thm_Restriction}.

\end{corollary}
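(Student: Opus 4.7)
The plan is to derive the corollary from the joint convergence $(N_t, S_t, L^{(t)}) \cid (N, S, L)$ established in Theorem~\ref{thm_Restriction}, via one more continuous mapping argument, and to observe that the resulting joint limit law combines with the point process convergence in \eqref{e:MT}. The starting point is to record that, since $S_t$ is nondecreasing with $L^{(t)} = S_t^{\leftarrow}(1)$, one has the clean representations
$$
A^{(t)} = 1 - S_t\bigl(L^{(t)}-\bigr), \qquad B^{(t)} = S_t\bigl(L^{(t)}\bigr) - 1,
$$
and analogously $A = 1 - S(L-)$ and $B = S(L) - 1$ in the limit.

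The key step is a continuity statement at an a.s.-regular limit. The $\alpha$-stable subordinator $S$ has atomless L\'evy measure and no drift, so almost surely it strictly straddles level $1$ at its first crossing time $L$; that is, $S(L-) < 1 < S(L)$, and $S$ is strictly increasing in a neighbourhood of $L$ apart from that single jump. Under this a.s.\ event, the map $(s, \ell) \mapsto \bigl(s(\ell-), s(\ell)\bigr)$, restricted to pairs with $\ell = s^{\leftarrow}(1)$, is continuous at $(S, L)$ in the product of $J_1$ and Euclidean topologies, in the same spirit as Lemma~\ref{Lema2}, which gives the companion continuity of the inverse itself at this type of limit point. Applying the continuous mapping theorem to Theorem~\ref{thm_Restriction} therefore yields
$$
\bigl(N_t, S_t(L^{(t)}-), S_t(L^{(t)}), L^{(t)}\bigr) \cid \bigl(N, S(L-), S(L), L\bigr),
$$
and passing through the trivial affine maps $u \mapsto 1-u$ and $u \mapsto u - 1$ delivers $(A^{(t)}, B^{(t)}, L^{(t)}) \cid (A, B, L)$. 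Since the point process reduction leading from $N_t$ to the left-hand side of \eqref{e:MT} has already been carried out in the proof of Theorem~\ref{thm_DLextended}, this joint convergence holds jointly with \eqref{e:MT}.

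The remaining assertions are identifications of the limit law. By construction, $(A, B)$ is the pair formed by the pre- and post-crossing gaps at level one for an $\alpha$-stable subordinator; its joint density is the classical one recorded in Theorem~8.6.3 of Bingham et al.~\cite{BGT}. The marginal of $L$ is the first passage time of level $1$ by the same subordinator, hence Mittag--Leffler, as already noted after Theorem~\ref{thm_Restriction}. The only place the specific structure of the limit is used is the a.s.\ strict straddling $S(L-) < 1 < S(L)$; this is the sole technical obstacle, and it is handled by the standard fact that an $\alpha$-stable subordinator neither has plateaux nor tangent crossings at a deterministic level.
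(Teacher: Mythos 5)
Your argument is correct and matches the paper's intended route: apply Theorem~\ref{thm_Restriction} together with Lemma~\ref{Lema2} and the continuous mapping theorem, then read off $A^{(t)}=1-S_t(L^{(t)}-)$ and $B^{(t)}=S_t(L^{(t)})-1$. Note only that Lemma~\ref{Lema2} already delivers the simultaneous convergence of all three quantities $(\tau_t,\,s_t(\tau_t-),\,s_t(\tau_t))$ under the hypothesis $s(v)<1$ for $v<\tau$ (which your a.s.\ straddling observation verifies for the stable subordinator), so the continuity of $(s,\ell)\mapsto(s(\ell-),s(\ell))$ that you sketch ``in the same spirit as Lemma~\ref{Lema2}'' is in fact the content of that lemma and can be cited directly rather than re-derived.
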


\section{Appendix}

\begin{proof} (of theorem~\ref{thm_PRM})
%\COM{check Danijel but give credit to BKS or thesis only, make into a LEMMA}
As we explained before the theorem, it remains to show sufficiency. Assume that $(Y_n)$ is strongly mixing with the extremal index equal to 1.

Denote by $\alpha(n)$ the mixing coefficients of the sequence $(Y_n)$.
Then set $l_n = \lfloor \max\{1, n^{0.1}\} \rfloor $,
clearly $l_n = o(n)$, $l_n\toi$. Introduce also the sequence
$r_n = \lfloor \max\{1, n \sqrt{\alpha (l_n)}, n^{2/3} \} \rfloor$,
and observe $r_n \toi $. By proposition 1.34 in Krizmani\'c \cite{KrThesis},
the sequence $(r_n)$ satisfies the following condition:
%($\mathcal{A}'(a_n)$)
%\label{c:mixcond}
for every $f \in C_{K}^{+}([0,\infty) \times
(0,\infty))$ 
\COM{check this repeatedly, Danijel's thesis too, recall $C^+_K$}
\begin{equation}\label{e:mixcon}
 E \biggl[ \exp \biggl\{ - \sum_{i=1}^{\infty} f \biggl(\frac{i}{n}, d_n^{-1}{Y_{i}}
 \biggr) \biggr\} \biggr]
 - \prod_{k=1}^{\lfloor L n /r_n \rfloor} \Exp \biggl[ \exp \biggl\{ - \sum_{i=1}^{r_{n}} f \biggl(\frac{kr_{n}}{n}, d_n^{-1}{Y_{i}} \biggr) \biggr\} \biggr] \to 0.
\end{equation}
as $n \to
\infty$, assuming  without loss of generality that the support of $f$  lies in $[0,L]\times [l,\infty]$ for  $L, l >0$. In other words, the strong mixing condition implies the condition $\mathcal{A}'(a_n)$
introduced in Basrak et al.~\cite{BKS}.

%\COM{The part (i)  of the non--clustering condition holds thus}

Observe that 
\begin{equation} \label{e:TailPr}
\lim_{n\toi} (P(Y \leq d_nu))^n = \lim_{n\toi} \left(1 - \frac{n P(Y > d_nu)}{n}\right)^n  = 
e^{-u^{-\alpha}}>0\,, 
\end{equation}
for any $u>0$.
Note that the sequences $(l_n)$ and $(r_n)$  satisfy  $r_n = o(n)$
$n\alpha(l_n) = o(r_n)$, $l_n = o(r_n)$. According to O'Brien \cite{OB87},
the extremal index $\theta$  of the sequence $(Y_n)$ satisfies
$$
  \theta = \lim_{n\toi} P(M_{r_n} \leq d_n u\mid Y_0 > d_n u) 
$$
for any fixed $u>0$. Since, by assumption, $\theta = 1$, we obtain
$$
 P(M_{r_n} > d_n u \mid Y_0 > d_n u ) = P \biggl( \max_{1 \le i \le r_{n}} {Y_{i}} > d_n u\,\bigg|\,{Y_{0}}>d_n u \biggr) 
  \to 0\,,
$$
as $n\toi$.
Hence, by stationarity, for every $u > 0$, \COM{check}
\begin{equation}
\label{e:anticluster}
 \limsup_{n \to \infty}
  P \biggl( \max_{1 \le |i| \le r_{n}} {Y_{i}} > d_n u\,\bigg|\,{Y_{0}}>d_n u \biggr) = 0.
\end{equation}
Consequently $(Y_n)$ is jointly regularly varying in the sense
of Basrak and Segers~\cite{BaSe}. Moreover,   its tail sequence is trivial.
% Roughly speaking, the extremes in this sequence come in isolation. 
%there is no clustering of extremes.
% in that sequence and the extremal index of
%the sequence $(Y_i)$ equals 1, see Resnick ?? or \cite{BKS}.

We observe next that by \eqref{e:mixcon} and \eqref{e:anticluster},  the point processes
\begin{equation}\label{e:tildePP}
 \widetilde{N}_n:=\sum_{i\geq 1}\delta_{\left(\frac{i}{n},\frac{{Y}_{i}}{d_n}\right)}
 \end{equation} 
 in $M_p$ satisfy the assumptions of Theorem 2.3 in  \cite{BKS}, adjusting
the state space from $[0,1] \times (0,\infty]$ used there, to the case  $[0,L] \times (0,\infty]$
we need here.
This means, in particular,
\COM{Leb measure denote uniformly}
that  for all $L>0$, $u>0$
\[
   \widetilde N_n  \bigg|_{[0, L] \times (u,\infty]} 
   \cid N^{(u)}
    = \sum_i \delta_{(T^{(u)}_i, u Z_{i})} \bigg|_{[0, L] \times (u,\infty]}\,,
\]
as $n \to \infty$, where 
 $$\sum_i \delta_{T^{(u)}_i, Z_i}$$
  is a homogeneous Poisson process on $[0,\infty) \times (1,\infty] $ with intensity 
 $u^{-\alpha}  \Leb \times \widetilde \mu $,
 where  $\widetilde \mu $ denotes the probability measure obtained by restricting measure $\mu$ to the interval $(1,\infty]$. However, if we denote  by $N = \sum_i \delta_{T_i, Z_i}$ a Poisson process on $[0,\infty) \times (0,\infty] $ with intensity 
 $ \Leb \times  \mu $, then for 
   for all $L>0$, $u>0$
$$
N^{(u)}
    \eind  N %=  \sum_i \delta_{(T_i, Z_{i})} 
    \bigg|_{[0, L] \times (u,\infty]}\,.
$$
In particular,
\[
   \widetilde N_n  
   \cid N \,.
\]
 
Consider the mapping from $M_p \times (0,\infty)^2$ to $M_p$, given by
  \begin{equation}\label{e:muv1}
   (m,a,b) \mapsto m^{a,b},
  \end{equation}
  where
$$
m^{a,b} (I \times J) = m (I/a \times J/b)\,,
$$  
for all measurable sets $I,J$.
Vague convergence theory as presented in section 3.4 of Resnick~\cite{Re87},  shows that this mapping is continuous. 
This turns to be useful, since
\begin{equation} \label{e:veza}
N_t (I, J ) = \widetilde{N}_{\lfloor \widetilde{d}(t) \rfloor}
 \left( \frac{\lfloor \widetilde{d}(t) \rfloor}{ \widetilde{d}(t) }  I \times \frac{d( \widetilde{d}(t))}{t} J
 \right)\,.
\end{equation}
 Observe that 
 $$
 \widetilde{N}_{\lfloor \widetilde{d}(t) \rfloor} \cid N \ \mbox{ and } \ 
 \left(  \frac{\lfloor \widetilde{d}(t) \rfloor}{ \widetilde{d}(t) },
 \frac{d( \widetilde{d}(t))}{t} \right) \to (1,1)
 $$
 as $t \toi$.  
% with respect to the distribution of the point process $N$ 
 Hence, by \eqref{e:veza} and \eqref{d_asympt_eq}, one can conclude
 \COM { when applied on measures restricted on the
 sets of the form $[0,L] \times (u,\infty]$,  $L,u>0$ }
$$
 N_t \cid N\,,
$$
as $t \toi$ as well.
%\qed
  
\end{proof}

Denote by $n, n_t, \  t>0$, arbitrary Radon
point measures in $M_p([0,\infty)\times (0,\infty]$.
One  can always write
$$
 n_t = \sum_i \delta_{v^t_i,y^t_i},\  n = \sum_i \delta_{v_i,y_i}
$$
for some sequences $(v^t_i)$, $(y^t_i)$, $(v_i)$ and $(y_i)$ of  positive real numbers.
Denote further the corresponding cumulative sum functions of the point measures $n, n_t, \  t>0$, by
$$
 s_t(u) = \dint_{[0,u]\times(0,\infty)} y\, n_t(dv,dy)\,,\ u \geq 0 \,,
$$
and
$$
 s(u) = \dint_{[0,u]\times(0,\infty)} y\, n(dv,dy)\,,\ u \geq 0 \,,
$$
Assume that these values are finite for each $u>0$, but tend to $\infty$ as $u\toi$. This makes 
$s_t$ and $s$ well defined, unbounded,
nondecreasing elements of the space of c\`adl\`ag functions $D[0,\infty)$.
%endowed with Skorohod's $J_1$ topology. 
Their right continuous generalized inverses (or hitting time functions) we denote
by $s^{\leftarrow}$ and $s_t^{\leftarrow}$, recall that
%\begin{equation*}
$s^{\leftarrow}(u)=\inf\{v\in[0,\infty):s(v)>u\} \,,\ u \geq 0\,.$
%\end{equation*}
We will use the following abbreviations in the sequel
\begin{equation*}
 \tau_t =  s_t^{\leftarrow}(1) \mbox{ and } \tau=s^{\leftarrow}(1)\,.
\end{equation*}
It is well known that $J_1$ convergence 
$$
 s_t \cij s\,, % \mbox{ together with } \tau_t \to \tau 
$$
in general does not imply convergence of $s_t$ towards $s$ at a given point. However,
the following technical lemma shows that under some regularity conditions, it 
implies the convergence of both $s_t$ and its left limit at the first passage time $\tau_t$. 
It is a consequence of Theorem 13.6.4  in Whitt~\cite{Wh02}  
\COM{check numbering in WHitt}
which has a weaker assumption that  $s_t$ converge towards $s$ in $M_2$ topology.
\begin{lemma} \label{Lema2} %(continuity of age and residual life functionals)
Assume that  \COM{1st coord. not needed?}
\begin{equation} \label{e:convof4}
s_t \cij s \,,
\end{equation}
and suppose that  $s(v)<1 $ for each $v < \tau$. Then
\begin{equation}
 (\tau_t,s_t(\tau_t-),s_t(\tau_t) ) \to  (\tau, s(\tau-) ,  s(\tau)) \,.
\end{equation}
\end{lemma}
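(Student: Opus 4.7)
The plan is to exploit the parametric characterization of $J_1$ convergence together with the observation that, under the regularity hypothesis, $s$ must have a strict jump across level $1$ at $\tau$, i.e.\ $s(\tau-)<1<s(\tau)$. Pick $T>\tau$ which is a continuity point of $s$, and use $s_t\cij s$ to obtain continuous strictly increasing bijections $\lambda_t\colon[0,T]\to[0,T]$ with $\|\lambda_t-\mathrm{id}\|_\infty\to 0$ and $\|s_t\circ\lambda_t-s\|_\infty\to 0$.

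The crux of the proof is to show that $\tau_t=\lambda_t(\tau)$ for all sufficiently large $t$. Fix $\eta>0$ with $s(\tau-)+\eta<1<s(\tau)-\eta$, and take $t$ large enough that $\|s_t\circ\lambda_t-s\|_\infty<\eta$. Then for every $v<\tau$ one has $s_t(\lambda_t(v))\leq s(v)+\eta\leq s(\tau-)+\eta<1$, so $s_t(u)<1$ on $[0,\lambda_t(\tau))$ and hence $\tau_t\geq\lambda_t(\tau)$; conversely $s_t(\lambda_t(\tau))\geq s(\tau)-\eta>1$ forces $\tau_t\leq\lambda_t(\tau)$.

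Once this identification is in hand, the three convergences follow at once: $\tau_t=\lambda_t(\tau)\to\tau$ because $\lambda_t\to\mathrm{id}$ uniformly; $s_t(\tau_t)=s_t(\lambda_t(\tau))\to s(\tau)$ by uniform convergence at $v=\tau$; and, using continuity of $\lambda_t$, $s_t(\tau_t-)=\sup_{v<\tau}s_t(\lambda_t(v))\to\sup_{v<\tau}s(v)=s(\tau-)$, since uniform convergence of nondecreasing functions preserves suprema, hence left limits. The only delicate point is the identification $\tau_t=\lambda_t(\tau)$, and this is precisely where the regularity hypothesis is indispensable: without the strict crossing $s(\tau-)<1<s(\tau)$, the hitting time $\tau_t$ need not lock onto $\lambda_t(\tau)$, and the third coordinate $s_t(\tau_t)$ can fail to converge. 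Alternatively, one may simply invoke Theorem 13.6.4 of Whitt~\cite{Wh02}, which obtains the same conclusion under the weaker hypothesis of $M_2$ convergence (implied by $J_1$).
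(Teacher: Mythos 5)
The paper's own ``proof'' of the lemma is a one--line citation of Theorem~13.6.4 in Whitt, whereas you give a genuine direct argument via the parametric characterization of $J_1$ convergence; that argument is clean and, under the strict--crossing assumption $s(\tau-)<1<s(\tau)$, correct, including the slightly delicate treatment of the left limit via $s_t(\tau_t-)=\sup_{v<\tau}s_t(\lambda_t(v))$.

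The gap is in the first sentence: the claim that the lemma's regularity hypothesis ``$s(v)<1$ for each $v<\tau$'' forces a strict jump across level one, i.e.\ $s(\tau-)<1<s(\tau)$, is false. That hypothesis only yields $s(\tau-)\le 1\le s(\tau)$, and either inequality can be an equality. Worse, with equality the lemma's conclusion actually fails. Take $s(v)=v$ for $v<1$ and $s(v)=v+1$ for $v\ge 1$, so that $\tau=1$, $s(\tau-)=1$, $s(\tau)=2$, and the stated hypothesis holds. Let $s_t=s+1/t$; then $s_t\to s$ uniformly, hence in $J_1$, yet $\tau_t=1-1/t$ and $s_t(\tau_t)=1\to 1\ne 2=s(\tau)$, so the third coordinate does not converge. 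A symmetric example ($s$ with a jump landing exactly at level one, $s_t=s-1/t$) breaks the second coordinate $s_t(\tau_t-)$. So the strict crossing is not a consequence of the hypothesis but an additional assumption your proof needs --- and indeed the paper's Theorem~\ref{thm_contin}, which is where the lemma is actually used, explicitly assumes $0<s(\tau-)<1<s(\tau)$, and this holds a.s.\ for the stable subordinator. Your proof is therefore a valid proof under that stronger hypothesis, but you should not present $s(\tau-)<1<s(\tau)$ as something that the lemma's stated assumption implies. (As a side remark, the first coordinate $\tau_t\to\tau$ does follow from the stated weaker hypothesis alone, by testing against continuity points of $s$ just below and just above $\tau$; it is only the second and third coordinates that require the strict crossing.)
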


\begin{theorem} \label{thm_contin} %(continuity of age and residual life functionals)
Assume that  
\begin{equation} \label{e:convof2}
 (n_t, s_t)
  \to (n, s)\,,
\end{equation}
in the product topology (of vague and $J_1$  topologies) as $t\toi$.
Assume further that 
$
 0<s(\tau-)<1 <s(\tau)  \mbox{ and }  n (\{v\}\times (0,\infty))  \leq 1
$
for all $v \geq 0$.
Then 
 \begin{equation}\label{NRestric}
  n_t\big|_{[0,\tau_t]}  \civ 
  n\big|_{[0,\tau]} \,
\quad
\mbox{ and }  \quad
  n_t\big|_{[0,\tau_t)}  \civ 
  n\big|_{[0,\tau)} \,.
\end{equation}
\end{theorem}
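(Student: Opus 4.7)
The plan is to identify the atoms of $n$ relevant to any given test function, locate their counterparts among the atoms of $n_t$, and then track which of them land before or at $\tau_t$. Starting from Lemma~\ref{Lema2}, since $s$ is nondecreasing the assumption $s(\tau-)<1$ forces $s(v)<1$ for all $v<\tau$, so
$$(\tau_t,\, s_t(\tau_t-),\, s_t(\tau_t)) \to (\tau,\, s(\tau-),\, s(\tau)).$$
Writing $y^\ast := s(\tau)-s(\tau-)>0$, the identity $s(u)-s(u-)=\int_{\{u\}\times(0,\infty)} y\, n(du,dy)$ together with $n(\{\tau\}\times(0,\infty))\leq 1$ shows that $(\tau,y^\ast)$ is the unique atom of $n$ at time $\tau$.

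To locate the corresponding atom of $n_t$, fix $0<\eta<y^\ast/2$ and choose $\delta>0$ small enough that the rectangle $U=[\tau-\delta,\tau+\delta]\times[y^\ast-\eta,y^\ast+\eta]$ contains no atom of $n$ other than $(\tau,y^\ast)$ and has $n$-null boundary; this is possible because $n$ is Radon and hence has only finitely many atoms in any set bounded away from $y=0$, and the boundary levels may be shifted slightly. Then $n(U)=1$, and vague convergence $n_t\civ n$ combined with integer-valuedness yields $n_t(U)=1$ for large $t$, producing a unique atom $(\tilde v_t,\tilde y_t)$ of $n_t$ in $U$ with $(\tilde v_t,\tilde y_t)\to(\tau,y^\ast)$.

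The hard part will be proving $\tilde v_t=\tau_t$ for large $t$. The jump of $s_t$ at $\tilde v_t$ is at least $\tilde y_t\to y^\ast$, while Lemma~\ref{Lema2} gives $s_t(\tau_t)-s_t(\tau_t-)\to y^\ast$. If $\tilde v_t\neq\tau_t$ along a subsequence, then, with $\delta$ chosen so that $\tau\pm\delta$ are continuity points of $s$, both locations eventually lie inside $(\tau-\delta,\tau+\delta)$, and monotonicity of $s_t$ gives
$$s_t(\tau+\delta)-s_t(\tau-\delta)\geq (s_t(\tilde v_t)-s_t(\tilde v_t-))+(s_t(\tau_t)-s_t(\tau_t-)) \to 2y^\ast.$$
But $J_1$ convergence $s_t\cij s$ at $\tau\pm\delta$ forces the left-hand side to tend to $s(\tau+\delta)-s(\tau-\delta)$, which is arbitrarily close to $y^\ast$ once $\delta$ is small, contradicting the lower bound $2y^\ast$. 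Hence $\tilde v_t=\tau_t$ eventually.

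With this identification in hand the conclusion is bookkeeping. For any $f\in C_K^+$ with support in $[0,L]\times[u,\infty]$, $L>\tau$ and $L,u$ chosen as $n$-continuity levels, the finitely many atoms of $n$ in $[0,L]\times[u,\infty]$ correspond under vague convergence to equally many atoms of $n_t$ converging coordinatewise. Atoms $(v_i,y_i)$ of $n$ with $v_i<\tau$ eventually satisfy $v_i^t<\tau_t$; the atom $(\tau,y^\ast)$, if $y^\ast\geq u$, corresponds to $(\tau_t,\tilde y_t)$ by the previous step; and atoms with $v_i>\tau$ eventually satisfy $v_i^t>\tau_t$. Summing $f$ over the matching index sets yields
$$\int f\, dn_t\Big|_{[0,\tau_t]}\to \int f\, dn\Big|_{[0,\tau]}\quad\text{and}\quad \int f\, dn_t\Big|_{[0,\tau_t)}\to \int f\, dn\Big|_{[0,\tau)},$$
with the atom at $\tau$ contributing to the first limit but not the second, proving both vague convergence statements of the theorem.
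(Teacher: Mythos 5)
Your proof is correct and shares the paper's overall strategy (use Lemma~\ref{Lema2} to control $\tau_t$ and the jump of $s_t$ there, then match atoms of $n_t$ with atoms of $n$), but the crucial identification $\tilde v_t=\tau_t$ is established by a genuinely different argument. The paper proceeds in two stages: it first proves the closed-interval convergence $n_t\big|_{[0,\tau_t]}\civ n\big|_{[0,\tau]}$ by bounding the one-sided increment $s_t(\tau+\vep)-s_t(\tau_t)$ below $u$, which shows $n_t$ has no atoms with second coordinate $>u$ in $(\tau_t,\tau+\vep]$; only then does it use that convergence to locate the rightmost atom $(\sigma_t,j_t)$ of $n_t\big|_{[0,\tau_t]}$, giving $\sigma_t\le\tau_t$ for free, and shows $s_t(\sigma_t)>1$ to get $\sigma_t\ge\tau_t$. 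You instead go directly after the atom of $n_t$ near $(\tau,y^\ast)$ and rule out $\tilde v_t\neq\tau_t$ by a two-sided increment contradiction: two disjoint jumps of size $\approx y^\ast$ inside $(\tau-\delta,\tau+\delta)$ would force $s_t(\tau+\delta)-s_t(\tau-\delta)\gtrsim 2y^\ast$, contradicting $J_1$ convergence to $s$ whose increment over that window is close to $y^\ast$. Your route is a bit more symmetric and yields both restrictions in one pass of bookkeeping; the paper's route is closer to the style of the Resnick Proposition 3.13 machinery it explicitly invokes and avoids having to first locate the limiting atom before the interval restriction is in hand. Both arguments hinge on the same assumption $s(\tau)-s(\tau-)>0$ and the ``at most one atom per vertical line'' hypothesis, and both are sound; no gap.

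One small stylistic point worth making explicit in your write-up: in the final bookkeeping step, the bijection between atoms of $n_t$ and $n$ in $[0,L]\times[u,\infty]$ should be obtained by choosing $L>\tau$ and $u$ so that $n(\partial([0,L]\times[u,\infty]))=0$ and $u\neq y^\ast$, exactly as the paper does when it fixes $u$ with $n([0,\infty)\times\{u\})=0$; this guarantees the counts match for large $t$ and is what underwrites the coordinatewise convergence of the matched atoms.
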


\begin{proof}

% Take $f \in C_{K}^{+}([0,\infty) \times
%(0,\infty))$, it necessarily has a support contained in $[0,\infty) \times (u,\infty]$ for some fixed $u>0$.

By the definition of the vague  convergence, it is enough to show the
convergence in the state space $[0,\infty) \times (u,\infty]$
for some number $u>0$ in the arbitrary neighborhood  of 0.
 Because $n$ is a Radon measure,  one can always find
  $u>0$ which is arbitrarily close to 0 and satisfies
$n([0,\infty) \times \{u\}) =0$ and
$u < s(\tau) - s(\tau-)$. Since $s$ is a c\`adl\`ag function, there exists $\vep_0>0$, such that
\begin{equation} \label{pomoc1}
s(\tau+\vep_0) - s(\tau) < u/3\,.
\end{equation}

Since $s_t$ and $s$ are monotone functions, by theorem 2.15 a) in Jacod and Shiryayev~\cite{JaSh}, chapter VI,  
\eqref{e:convof2} implies that there exists a dense set of points $G\subseteq [0,\infty)$
such that  $s_t(d) \to s(d)$ for each $d \in G$. 
Take $0< \vep \leq \vep_0$,  such that $\tau + \vep \in G$,
$n(\{\tau+ \vep \} \times (0,\infty) ) =0$, and such that
\COM{ uvjet $n(\{\tau+ \vep \} \times (0,\infty) ) =0$ treba}
\begin{equation} \label{pomoc2}
n\big|_{[0,\tau]\times (u,\infty)} = 
n\big|_{[0,\tau+\vep]\times (u,\infty)} =
\sum_{i=1}^k \delta_{v_j, y_j}\,.
\end{equation}
By Proposition 3.13 in Resnick~\cite{Re87}, there exists a constant $t'_0$, such that for all $t>t'_0$
\begin{equation*}
n_t\big|_{[0,\tau+\vep ]\times (u,\infty)} = 
\sum_{i=1}^k \delta_{v^t_j, y^t_j}\,.
\end{equation*}
and 
\begin{equation} \label{ConOfPoints}
(v^t_j, y^t_j) \to (v_j, y_j)\,,\quad j =1,\ldots ,k \,,
\end{equation}
as $t\toi$. 
Since $\tau_t \to \tau$ by lemma~\ref{Lema2}, there also exist $t''_0$, such that for all $t>t'_0$
\begin{equation*}
\tau_t < \tau +\vep\,. 
\end{equation*}
For $t > t'_0 \vee t''_0$
\begin{equation}
n_t\big|_{[0,\tau+\vep ]\times (u,\infty)} = 
n_t\big|_{[0,\tau_t]\times (u,\infty)}  + 
n_t\big|_{(\tau_t,\tau+\vep]\times (u,\infty)} \,.
\end{equation}
But we will show that the second point process on the right hand side above
equals zero for all $t$ large enough.

Because $s_t(\tau+\vep) \to  s(\tau+\vep)\,,\ s_t(\tau_t) \to  s(\tau)$,  by \eqref{pomoc1}
there exists $t'''_0$ such that  for $t> t'''_0$
\begin{equation}
 s_t( \tau+\vep ) - s_t (\tau_t)  < \frac 23 u.
\end{equation}
\COM{ not $\leq  2 | s( \tau+\vep ) - s (\tau)| $}

Now for $t >  t'_0 \vee t''_0 \vee t'''_0$
\begin{eqnarray}
\lefteqn{ n_t \left({(\tau_t,\tau+\vep]\times (u,\infty)} \right) 
= \dsum_{v^t_i \in (\tau_t,\tau+\vep]} \1_{\{y^t_i > u\}}}\\
 & \leq &
\frac 1u  \dsum_{v^t_i \in (\tau_t,\tau+\vep]} y^t_i  \1_{\{y^t_i > u\}}
\leq \frac{1}{u} \left( s_t( \tau+\vep ) - s_t (\tau_t)  \right) < \frac 23\,.
\end{eqnarray}
Since $n_t$ is a point measure, $n_t \left({(\tau_t,\tau+\vep]\times (u,\infty)} \right)  =0$ for all $t$ large enough.
So, by  Proposition 3.13 in \cite{Re87} 
\begin{equation} \label{conv:pointmeas}
  n_t\big|_{[0,\tau_t]}  \civ
  n\big|_{[0,\tau]} \,.
\end{equation}

Suppose that the rightmost point of the point measure on the right hand side above
has the coordinates
$$
 (\tau, j) \mbox{ where } j = s(\tau)-s(\tau-) > 0\,,
$$
because $ n (\{\tau \}\times (0,\infty)) \leq 1 $
Then, by  \eqref{ConOfPoints}  there exist
points $(\sigma_t,j_t)$ of the point measures on the left hand side of \eqref{conv:pointmeas}
such that
$$
(\sigma_t,j_t) \to 
(\tau,j) = (\tau, s(\tau)-s(\tau-) )\,.
$$

By the assumption, there exists $\vep >0$ such that $s(\tau) > 1+\vep$.
 Note that there also exists $d \in G$, $d< \tau$ such that
 $$
  s(\tau - )  - \frac{\vep}{4} < s(d)\,. 
 $$
 For all sufficiently large $t$ now $\sigma_t>  d$,  and
 \begin{equation*}
{ s_t(\sigma_t) \geq s_t (d) + j_t > s(d) - \frac{\vep}{4} + j - \frac{\vep}{4}   }
>   s(\tau-)  + j  - \frac{3}{4}\vep >1\,.
 \end{equation*}
Therefore  $\sigma_t = \tau_t$ for all sufficiently large $t$.
Recall that we fixed a constant $u>0$ such that $n([0,\infty) \times \{u\}) =0$.
Then for all large enough $t$
\begin{equation*}
  n_t\big|_{[0,\tau_t)\times (u,\infty]}  = 
  n_t\big|_{[0,\tau_t]\times (u,\infty]}  - \delta_{\tau_t,j_t}  \,.
\end{equation*}

Thus \eqref{conv:pointmeas} together with $(\tau_t,j_t) \to (\tau,j)$,   implies
\begin{equation*}
  n_t\big|_{[0,\tau_t)\times (u,\infty]}  = 
  n_t\big|_{[0,\tau_t]\times (u,\infty]}  - \delta_{\tau_t,j_t}  \civ 
  n\big|_{[0,\tau]\times (u,\infty]}- \delta_{\tau,j}  =  n\big|_{[0,\tau)\times (u,\infty]} \,,
\end{equation*}
as $t\toi$.
%\qed

\end{proof}
\COM{don't worry about possibility of two points on the line, there is some additional text in earlier versions}

\section*{Acknowledgements}
This research was supported in part by Croatian Science Foundation under the project 3526.

\bibliographystyle{abbrv}
\bibliography{bbbib_initials}{}

\end{document}